\theoremstyle{thmstyleone}%
\newtheorem{theorem}{Theorem}[section]
\theoremstyle{thmstyletwo}%
\newtheorem{example}{Example}[section]%
\newtheorem{remark}{Remark}[section]%
\theoremstyle{thmstylethree}%
\newtheorem{definition}{Definition}[section]%
\newtheorem{lemma}{Lemma}[section]
\newtheorem{corollary}{Corollary}[section]
\begin{document}

\title{ A new method for estimating the real roots of real differentiable functions}


\author*[1]{\fnm{Hassan} \sur{Khandani}}\email{ Hassan.Khandani@iau.ac.ir, Khandani.hassan@yahoo.com}

\author[2]{\fnm{Farshid} \sur{Khojasteh}}\email{Fa\_khojasteh@iau.ac.ir}
\equalcont{These authors contributed equally to this work.}

\equalcont{These authors contributed equally to this work.}

\affil*[1]{\orgdiv{Department of mathematics, Mahabad Branch}, \orgname{Islamic Azad university}, \orgaddress{ \city{Mahabad}, \country{Iran}}}

\affil[2]{\orgdiv{Department of Mathematics, Arak-Branch}, \orgname{Islamic Azad University}, \orgaddress{\city{Arak}, \country{Iran}}}

\abstract{We introduce a new type of Krasnoselskii's result. Using a simple differentiability condition, we relax the nonexpansive condition in Krasnoselskii's theorem.  More clearly, we analyze the convergence of the sequence $x_{n+1}=\frac{x_n+g(x_n)}{2}$ based on some differentiability condition of $g$ and present some fixed point results. We introduce some iterative sequences that for any real differentiable function $g$ and any starting point $x_0\in \mathbb [a,b]$  converge monotonically to the nearest root of $g$ in $[a,b]$ that lay to the right or left side of $x_0$.  Based on this approach, we present an efficient and novel method for finding the real roots of real functions. We prove that no root will be missed in our method. It is worth mentioning that our iterative method is free from the derivative evaluation which can be regarded as an advantage of this method in comparison with many other methods. Finally, we illustrate our results with some numerical examples.}

\keywords{ Krasnoselskii's theorem, Iterative sequence, Newton-Raphson Method, Root estimation, Real function}
\pacs[MSC Classification]{26A18, 49M15}
\maketitle
\section{Introduction}\label{sec1}
We start our discussion by  stating the Krasnoselskii's theorem and some generalizations of this result as follows:
\begin{definition}(Krasnoselskii's sequence \cite{berinde2007iterative})\label{krseq}
Let $X$ be a Banach space and $A$ be a non empty subset of $X$. For each $x_0\in A$ and $\lambda\in (0,1)$, the Krasnoseelskii iteration of $f:A\to A$ is defined as follows:
\begin{equation}
x_{n+1}=\lambda x_n+(1-\lambda)f(x_n)\text{ for each } n\ge 0.
\end{equation}
\end{definition}
\begin{theorem}[Krasnoselskii \cite{krssnoselskiitwo}]
 Suppose $A$ is a uniformly convex compact subset of a Banach space $X$. For every nonexpansive mapping $f:A\to A$  (i.e., $\lvert f(x)-f(y)\rvert\le \lvert x-y\rvert$ for each $x,y\in A$) the sequence of iterations defined by Equation \ref{m11} converges to a fixed point of $f$ .
\end{theorem}
Edelstein extended this result to a Banach space $X$ that has a strictly convex norm \cite{edelstein1966remark}. Then, Ishikawa showed that this result is true for an arbitrary Banach space \cite{edelstein1978nonexpansive}. Moreover, Edelstein and O'brien independently proved this result too. Kirk, weakening the nonexpansive condition, extended this result to directionally nonexpansive mappings in a new approach \cite{kirk2000nonexpansive}.\\
Bailey gave proof of Krasnoselskii's result for nonexpansive real functions on a closed interval \cite{subrahmanyam2018elementary}, \cite{bailey1974krasnoselski}. Bruce P. Hillam extended Bailey's result to some Lipshitzian functions \cite{hillam1975generalization}. For some other studies of fixed points of real functions we refer the reader to \cite{borwein1991fixed}, \cite{herceg1996convergence}.\\
In all these and other similar results, the related mapping is assumed to be nonexpansive or Lipshitzian \cite{berinde2007iterative}. Dealing with real functions and using a slight differentiability condition, we relax the nonexpansive or Lipshitz condition. In this way, we introduce a new type of Krasnoselskii's results. First, we present some fixed point results for these types of functions. Then, we present a new iteration method to find the root of real-valued functions. First, we present a brief introduction of root-finding methods as follows.\\
Most of the time, root-finding methods are based upon iterative sequences. More clearly, for a given function an auxiliary function is defined, then an initial guess is made. Calculating the auxiliary function at this point better approximates the initial guess. Continuing this process will define an iterative sequence that may converge to the guessed root. There are many different root-finding methods such as the Bracketing method, Bisection method, False position method, Newton's method or a combination of these methods that converge to a guessed root \cite{epperson2021introduction}. Sometimes, none of these methods can find all the roots. Generally, there should be some different methods at our disposal when we are looking for roots. This manuscript introduces a new method that at least theoretically can find all the real roots.\\

 To begin with our discussion, let   $f:[a,b]: \subset \mathbb R\to \mathbb R$ be an arbitrary function. $x\in [a,b]$ is called a fixed point of $f$ if $f(x)=x$ \cite{granas2003elementary, agarwal2001fixed}. For the function $f$ we define the auxiliary function $F(x)=\frac{x+f(x)}{2}$ for each $x\in [a,b]$. Every fixed point of $F$ is also a fixed point of $f$ and vice versa. Set $\lambda =\frac{1}{2}$ in the Definition \ref{krseq} and define the iteration sequence $\{x_n\}$ as follows:
\begin{equation}\label{m11}
x_{n+1}=\frac{x_n+f(x_n)}{2}\text{ for each } n\ge 0.
\end{equation}
 We study the convergence of the above sequence. A question naturally arises is that: is the sequence $\ref{m11}$ defined above convergent for every $x_0=a\in [0,1]$?  For another counterexample, we refer the reader to \cite{subrahmanyam2018elementary}. In Example \ref{counterexample} we present another counterexample. We find some necessary conditions under which this sequence is convergent and based on this study we find the fixed points of $f$. Furthermore, we take a step forward and find all the real roots of $f$ as well. We show that our method will not miss any real root (see Corollary $\ref{nomiss}$ ).\\
This paper is organized as follows. First, we study a real function $f$ on an interval $[a,c]$ or $[c,b]$, where $c$ is the only fixed point of $f$ in these intervals. Under some differentiability conditions for $f$, we show that the sequence $\ref{m11}$ always converges to the fixed point $c$ for each starting point $x_0$ in $[a,c]$ or $[c,b]$ (lemma $\ref{existextension1}$ and lemma $\ref{existextension2}$). Then, based on this results, we start looking for the real roots of $f$ as well (Corollaries \ref{root1}, \ref{root2}). Then, we make a comparison between our methods and the Newton-Raphson method. Finally, we illustrate our results with some examples.\\
The Newton-Raphson sequence for the function $f$ that will be needed in the sequel is defined as follows.
\begin{equation}\label{newtonseq}
x_{n+1}=x_n-\frac{f(x_n)}{f^{'}(x_n)}\text { for all } n\ge 0.
\end{equation}
Many other iterative methods estimate the roots of real functions, such as the secant method, and the Ridder, Halley, and Muller methods. For a more complete list of these methods we can refer the reader to \cite{suli2003introduction}, \cite{amin}, \cite{oftadeh2010novel}. For the convergence of the Newton-Raphson and most of these methods, the starting point should be chosen very close to the guessed root. However, in our approach, it does not matter how far the starting points are from the guessed root. It is worth mentioning that, theoretically, our method can be used as a tool to study other iterative methods as well. For an analysis of the convergence of the Newton-Raphson sequence via our method, we refer the reader to \cite{hassan}.

In this manuscript, we show the set of real numbers by $\mathbb R$. We denote the set  $\{0,1,2,\dots \}$  of nonnegative integers by $\mathbb N$. For each $a,b\in \mathbb R$ with $a<b$, $[a,b]=\{x\in \mathbb R:a\le x\le b\}$ and $(a,b)=\{x\in \mathbb R:a< x< b\}$ are called closed and open interval from $a$ to $b$ respectively. Let $f$ be a real-valued function on $\mathbb R$  we show the left  and right derivative of f at $a$ by $f^{'}(a-),f^{'}(a+)$ respectively.

\section{main results}\label{sec2}
To begin our study, we present the following two results to show that when $x_0$ is close enough to a fixed point of $f$, the sequence $x_{n+1}=\frac{x_n+f(x_n)}{2}$ converges to $c$.
\begin{lemma}\label{existleft}
Let $\delta > 0$, $ c\in \mathbb R$ and $f$ be a continuous real function on $[c-\delta,c]$ that is left differentiable at $c$ with $f^{'}(c-)\ge -1$ such that $f(x) > x$ for any $x\in(c-\delta,c)$ and $c$ is the unique fixed point of $f$ in $[c-\delta,c]$. Assume $x_0\in (c-\delta,c)$ and define for each $n\ge 0$:
\begin{equation}\label{mean}
x_{n+1}=\frac{x_n+f(x_n)}{2}.
\end{equation}
If $\delta$ is chosen small enough, then the sequence $\{x_n\}$ converges to $c$.
\end{lemma}
\begin{proof}
Define $g(x)=\frac{f(x)-x}{x-c}$ for any $x\in (c-\delta,c)$. By our assumption $g(c-)=\lim_{x\to c-} \frac{f(x)-x}{x-c}=\frac{f^{'}(c-)-1}{1}\ge -2$. So,there is some $\delta > \sigma > 0$ such that $\frac{f(x)-x}{x-c}\ge-2$ for every $x\in (c-\sigma,c)$. Therefore, for each $x\in (c-\sigma,c)$ we have:
\begin{equation}\label{mean}
\frac{f(x)+x}{2}\le c.
\end{equation}
If $x_0\in (c-\sigma,c)$, then $x_1=\frac{f(x_0)+x_0}{2} > x_0$ and $x_1=\frac{f(x_0)+x_0}{2}\le c$. Suppose $n\ge 1$ be a an integer such that $c-\sigma \le x_{n-1}\le x_n\le c$. By assumption $f(x_n)>x_n$ which follows that $x_{n+1}=\frac{f(x_n)+x_n}{2}>x_n$. By \ref{mean} we have $x_{n+1}=\frac{f(x_n)+x_n}{2}\le c$
Therefore, by induction we see that $x_n\le c$ and $x_{n+1}\ge x_n$ for every $n\ge 0$. Hence, $\{x_n\}$ converges to some $a\in [c-\sigma,c]$. It is easy to see that $f(a)=a$, so by the uniqueness assumption $a=c$ and the proof is complete.
\end{proof}

\begin{lemma}\label{exisright}
Let $\delta>0$, $ c\in R$ and $f$ be a continuous real-valued function on $[c,c+\delta]$ that is right differentiable at $c$ with $f^{'}(c+)\ge -1$ such that $f(x)<x$ for each $x\in(c,c+\delta)$ and $c$ is the unique fixed point of $f$ in $[c,c+\delta]$. Let $c\not =x_0\in (c,c+\delta)$ and for each $n\ge 0$ define:
\begin{equation}
x_{n+1}=\frac{x_n+f(x_n)}{2}.
\end{equation}
If  $\delta$ is chosen small enough, then the sequence $\{x_n\}$  converges to $c$.
\end{lemma}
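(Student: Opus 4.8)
The plan is to follow the proof of Lemma~\ref{existleft} verbatim in structure, reflecting each inequality across $c$ since we now work on the right-hand interval $[c,c+\delta]$ instead of $[c-\delta,c]$.

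First I would introduce the auxiliary function $g(x)=\frac{f(x)-x}{x-c}$ on $(c,c+\delta)$. Since $f(c)=c$, we may write $g(x)=\frac{f(x)-f(c)}{x-c}-1$, so $g$ is continuous on $(c,c+\delta)$ and $\lim_{x\to c^{+}}g(x)=f^{'}(c+)-1\ge -2$ by hypothesis. Continuity then yields some $\sigma$ with $0<\sigma<\delta$ for which $g(x)\ge -2$ on $(c,c+\sigma)$.

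Next, because $x-c>0$ on this interval --- this is the one genuine sign difference from Lemma~\ref{existleft}, where the factor $x-c$ was negative and flipped the inequality --- multiplying $g(x)\ge -2$ by $x-c$ preserves the direction and gives $f(x)-x\ge -2(x-c)$, i.e. $\frac{x-f(x)}{2}\le x-c$ for every $x\in(c,c+\sigma)$. I would then start the iteration at $x_0\in(c,c+\sigma)$: from $f(x_0)<x_0$ we get $x_1=\frac{x_0+f(x_0)}{2}<x_0$, while $x_0-x_1=\frac{x_0-f(x_0)}{2}\le x_0-c$ forces $x_1\ge c$. An induction identical to that of Lemma~\ref{existleft} shows the sequence remains in $[c,c+\sigma)$, is monotone decreasing, and is bounded below by $c$, hence converges to some $a\in[c,c+\sigma]$. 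Passing to the limit in the recurrence and using continuity of $f$ gives $f(a)=a$, and uniqueness of the fixed point forces $a=c$.

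The step requiring the most care is precisely this reflection of signs: on $[c-\delta,c]$ the multiplier $x-c$ is negative so the direction of the estimate flips, whereas here it is positive and the resulting inequality $\frac{x-f(x)}{2}\le x-c$ comes out as the mirror image of the one obtained before. Getting this orientation correct is what makes the sequence decrease to $c$ from above (rather than increase to it from below) and keeps all iterates trapped in $[c,c+\sigma)$.
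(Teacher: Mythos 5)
Your proposal is correct and takes essentially the same route as the paper's own proof: the same auxiliary function $g(x)=\frac{f(x)-x}{x-c}$, the same localization to a subinterval $(c,c+\sigma)$ on which $g\ge -2$, the same monotone-decreasing-and-bounded-below induction, and the same passage to the limit plus uniqueness of the fixed point. Your formulation of the key step, $\lim_{x\to c^{+}}g(x)=f^{'}(c+)-1\ge -2$, is in fact cleaner than the paper's (which writes this as a condition on $g^{'}(c+)$), though both versions share the same unremarked caveat: upgrading that limit inequality to $g(x)\ge -2$ on a whole interval $(c,c+\sigma)$ is only automatic when $f^{'}(c+)>-1$ strictly.
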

\begin{proof}
Define $g(x)=\frac{f(x)-x}{x-c}$ for each $x\in (c,c+\delta)$. By our assumption $g(c+)\ge -2$, so there exists some $\delta>\sigma>0$ such that $\frac{f(x)-x}{x-c}\ge-2$ for each  $x\in (c,c+\sigma)$. Therefore, for each $x\in (c,c+\sigma)$ we have:
\begin{equation}
\frac{f(x)+x}{2}\ge c.
\end{equation}
If $x_0\in (c,c+\sigma)$, then $x_1=\frac{f(x_0)+x_0}{2}<x_0$ and $x_1=\frac{f(x_0)+x_0}{2}\ge c$. Arguing by induction we deduce that $x_n\ge c$ and $x_{n+1}\le x_n$ for each $n\ge 0$. Therefore, $\{x_n\}$ converges to some $a\in [c,c+\sigma]$. It is easy to see that $f(a)=a$. So by uniqueness assumption $a=c$ and the proof is complete.
\end{proof}
 To ensure the convergence of the sequence $x_{n+1} =x_n+\frac{f(x_n)}{2}$ in the neighborhood of $c$ in the preceding two lemmas, we only needed the one-sided derivative of $f$ at $c$. However, when we begin looking for a fixed point $c$ of the function $f$ on some interval, we have no idea whether the condition $f^{'}(c-)\ge-1$ or $f^{'}(c+)\ge-1$  holds or not. Therefore, strengthen the differentiability condition we make it practically more useful as follows.
\begin{lemma}\label{existextension1}
Let $f$ be a continuous real-valued function on $[a,c]$ that is differentiable on $(a,c)$  with $f^{'}(x)\ge -1$ on $(a,c)$, $f(x)>x$ for each $x\in[a,c)$, and $c$ is the unique fixed point of $f$ in $[a,c]$. Let $x_0\in [a,c)$ and for each $n\ge 0$ define:
\begin{equation}\label{meanex}
x_{n+1}=\frac{x_n+f(x_n)}{2},
\end{equation}
then the sequence $\{x_n\}$  converges to $c$.
\end{lemma}
\begin{proof} Assume there exists $x\in [a,c)$ such that $\frac{f(x)-x}{x-c}<-2$. Define $h(x)=f(x)-x$ for each $x\in [a,c]$  and applying Lagrange mean value theorem for $h$ \citep{rudin1976principles}, there exists $\sigma \in (c,x)$ such that:
\begin{equation}
-2\le h^{'}(\sigma)=\frac{h(x)-h(c)}{x-c}=\frac{f(x)-x}{x-c}<-2,
\end{equation}
that is a contradiction. Therefore, for each $x\in [a,c)$ we have $\frac{f(x)-x}{x-c}\ge-2$. Equivalently, we have:
\begin{equation}
\frac{f(x)+x}{2}\le c\text { for each } x\in [a,c).
\end{equation}
Now, define $\{x_n\}$ as in equation $\ref{meanex}$. For each $n\ge 0$, $x_n\le c$ and $x_{n+1}\ge x_n$. Therefore, $\{x_n\}$ converges to some point $b\in[a,c]$ with $f(b)=b$. Since $c$ is the unique fixed point of $f$  in $[a,c]$ we deduce that $b=c$ that completes the proof.
\end{proof}
\begin{remark}
 Example $\ref{counterexample}$ shows that the differentiability condition, $f^{'}(x)\ge -1$ on $(a,c)$, in Lemma $\ref{existextension1}$ is necessary.
\end{remark}
\begin{lemma}\label{existextension2}
Let $f$ be a continuous real-valued function on $[c,b]$ that is differentiable on $(c,b)$  with $f^{'}(x)\ge -1$ on $(c,b)$, $f(x)<x$ for each $x\in(c,b]$, and $c$ is the unique fixed point of $f$ in $[c,b]$. Let $x_0\in (c,b]$ and for each $n\ge 0$ define:
\begin{equation}\label{meanex2}
x_{n+1}=\frac{x_n+f(x_n)}{2},
\end{equation}
then the sequence $\{x_n\}$  converges to $c$.
\end{lemma}
\begin{proof}
 Similar to the proof of Lemma $\ref{existextension1}$, for each $x\in [a,c)$ we have $\frac{f(x)-x}{x-c}\ge-2$. Since $f(x)<x$ and $x>c$ for each $x\in (c,b]$, we deduce that  $\frac{x-f(x)}{2}\le x-c$. So we have:
\begin{equation}
\frac{f(x)+x}{2}\ge c\text { for each } x\in [c,b).
\end{equation}
Now, define $\{x_n\}$ as in equation $\ref{meanex2}$. For each $n\ge 0$, $x_n\ge c$ and $x_{n+1}\le x_n$. Therefore, $\{x_n\}$ converges to some point $d\in[c,b)$ with $f(d)=d$. Since $c$ is the unique fixed point of $f$  in $[c,b]$, we deduce that $d=c$ and the proof is complete.
\end{proof}
\begin{remark}
By Example $\ref{notbereplaced}$, in Lemma $\ref{existextension1}$ the assumption  $f(x)>x$ for each $x\in[a,c)$ can not be replaced with  $f(x)<x$ for each $x\in[a,c)$. Also, in Lemma $\ref{existextension2}$ the assumption  $f(x)<x$ for each $x\in(c,b]$ can not be replaced with  $f(x)>x$  for each $x\in(c,b]$.
\end{remark}
Now, we look for the roots of a given function instead of its fixed points.
\begin{corollary}\label{root1}
Let $f$ be a continuous real-valued function on $[a,b]$  and $c\in (a,b)$  be the unique root of $f$ in $[a,b]$. Suppose that $f$   is differentiable on $A=(a,c)\cup(c,b)$ with $f^{'}(x)\ge -2$ for each  $x\in  A$. Also, suppose that $f(x)>0$ for each $x\in[a,c)$ and $f(x)<0$ for each $x\in(c,b]$. For each $x_0\in [a,b]$ and for each $n\ge 0$ define:
\begin{equation}\label{meanexco1}
x_{n+1}=x_n+\frac{f(x_n)}{2},
\end{equation}
then the sequence $\{x_n\}$  converges to $c$.
\end{corollary}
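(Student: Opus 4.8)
The plan is to reduce this root-finding statement to the two fixed-point lemmas already established, by passing to the auxiliary function $g(x)=x+f(x)$ on $[a,b]$. This substitution does two things at once. First, the recursion unwinds as
\begin{equation}
x_{n+1}=x_n+\frac{f(x_n)}{2}=\frac{x_n+\bigl(x_n+f(x_n)\bigr)}{2}=\frac{x_n+g(x_n)}{2},
\end{equation}
so the sequence in \eqref{meanexco1} is exactly the mean iteration of $g$ studied in Lemma \ref{existextension1} and Lemma \ref{existextension2}. Second, $x$ is a fixed point of $g$ precisely when $f(x)=0$, so $c$ is the unique fixed point of $g$ in $[a,b]$.

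First I would translate each hypothesis on $f$ into the corresponding hypothesis on $g$. Continuity of $g$ on $[a,b]$ is immediate, and on $A=(a,c)\cup(c,b)$ we have $g^{'}(x)=1+f^{'}(x)\ge 1+(-2)=-1$; this is exactly the point at which the bound $f^{'}\ge -2$ is calibrated to produce the bound $g^{'}\ge -1$ demanded by the lemmas. Moreover $g(x)-x=f(x)$, so the sign conditions $f(x)>0$ on $[a,c)$ and $f(x)<0$ on $(c,b]$ become $g(x)>x$ on $[a,c)$ and $g(x)<x$ on $(c,b]$, matching the monotonicity hypotheses of Lemma \ref{existextension1} and Lemma \ref{existextension2} respectively.

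Then I would split on the location of $x_0$. If $x_0\in[a,c)$, I would apply Lemma \ref{existextension1} to the restriction of $g$ to $[a,c]$: the differentiability bound holds on $(a,c)\subset A$, the inequality $g(x)>x$ holds on $[a,c)$, and $c$ is the unique fixed point of $g$ there (being unique already in the larger interval $[a,b]$), so $\{x_n\}$ converges to $c$. Symmetrically, if $x_0\in(c,b]$ I would invoke Lemma \ref{existextension2} on $[c,b]$. The degenerate case $x_0=c$ gives $f(x_0)=0$ and hence a constant sequence equal to $c$. One point worth verifying is that the iterates never leave the half-interval containing $x_0$; this is guaranteed internally by the lemmas, whose proofs show the sequence is monotone and trapped between $x_0$ and $c$.

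I expect no serious obstacle here: the whole content is the observation that adding $x$ to $f$ converts roots into fixed points while shifting the derivative bound by exactly $1$, after which the two preceding lemmas apply directly. The only thing requiring care is the bookkeeping, namely checking that the case $x_0$ to the left of $c$ routes to Lemma \ref{existextension1} and the case $x_0$ to the right of $c$ routes to Lemma \ref{existextension2}, and that the endpoints $x_0=a$ and $x_0=b$ are indeed covered by the half-open intervals $[a,c)$ and $(c,b]$ appearing in those lemmas.
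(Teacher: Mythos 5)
Your proposal is correct and follows essentially the same route as the paper: both define the auxiliary function $F(x)=x+f(x)$, observe that the iteration \eqref{meanexco1} becomes the mean iteration $x_{n+1}=\frac{x_n+F(x_n)}{2}$, and apply Lemma \ref{existextension1} on $[a,c]$ or Lemma \ref{existextension2} on $[c,b]$ according to the position of $x_0$. In fact your write-up is slightly more careful than the paper's, since you explicitly verify the derivative shift $F^{'}=1+f^{'}\ge -1$ and handle the case $x_0=c$, both of which the paper leaves implicit.
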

\begin{proof}Define $F(x)=f(x)+x$ for each $x\in [a,b]$. $F(x)>x$ for each $x\in [a,c)$ and  $F$ satisfies all conditions of  Lemma $\ref{existextension1}$ on $[a,c]$. Let $x_0\in[a,c]$. Define $\{x_n\}$  by:
\begin{equation}
x_{n+1}=\frac{F(x_n)+x_n}{2}=x_n+\frac{f(x_n)}{2}\text{ for all } n\ge 0,
\end{equation}
 $\{x_n\}$ converges to $c$  by Lemma $\ref{existextension1}$. $x_0\in [c,b]$ and $F$ satisfies all conditions of  Lemma $\ref{existextension2}$ on $[c,b]$. Now, the above sequence converges to $c$. So the proof is complete.
\end{proof}
\begin{remark}\label{notconverges}
 In Corollary $\ref{root1}$ suppose $f(x)<0$ for each $x\in[a,c)$ and $f(x)>0$ for each $x\in(c,b]$. Then, the sequence $\ref{meanexco1}$ does not converge to $c$ for any $x_0\in A=[a,c)\cup (c,b]$. To see this, assume that $x_0\in[a,c)$. $f(x_0)<0$, therefore, $x_{1}=x_0+\frac{f(x_0)}{2}<x_0$. By induction we see that $x_{n+1}<x_{n}<\dots <x_0<c$. Therefore, $x_n\not \to c$ as $n\to \infty$. The same is true when $x_0\in (c,b]$. We define the sequence $\ref{meanexco2}$ in the following Corollary to converges to these kind of roots.
\end{remark}
\begin{corollary}\label{root2}
Let $f$ be a continuous real-valued function on $[a,b]$  and $c\in (a,b)$  be the unique root of $f$ in $[a,b]$. Suppose that $f$   is differentiable on $A=(a,c)\cup(c,b)$ with $f^{'}(x)\le 2$ for each  $x\in  A$. Also, suppose that $f(x)<0$ for each $x\in[a,c)$ and $f(x)>0$ for each $x\in(c,b]$. For each $x_0\in [a,b]$ and for each $n\ge 0$ define:
\begin{equation}\label{meanexco2}
x_{n+1}=x_n-\frac{f(x_n)}{2},
\end{equation}
then the sequence $\{x_n\}$  converges to $c$.
\end{corollary}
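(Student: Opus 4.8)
The plan is to reduce this to the fixed-point lemmas exactly as was done for Corollary \ref{root1}, but with the roles of addition and subtraction interchanged. The key observation is that, whereas Corollary \ref{root1} used the auxiliary function $F(x)=x+f(x)$, here the correct auxiliary function is $G(x)=x-f(x)$, because the recurrence \ref{meanexco2} rewrites as a mean iteration for $G$:
\begin{equation}
x_{n+1}=x_n-\frac{f(x_n)}{2}=\frac{x_n+\bigl(x_n-f(x_n)\bigr)}{2}=\frac{x_n+G(x_n)}{2}.
\end{equation}
Thus the sequence under study is precisely the Picard-type mean iteration treated in Lemmas \ref{existleft}--\ref{existextension2}, applied to $G$ rather than to $F$.

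First I would record the three properties of $G$ that those lemmas require. Since $f(x)<0$ on $[a,c)$, we get $G(x)=x-f(x)>x$ there, and since $f(x)>0$ on $(c,b]$, we get $G(x)<x$ there; these match the sign hypotheses of Lemmas \ref{existextension1} and \ref{existextension2} respectively. Next, differentiating gives $G^{'}(x)=1-f^{'}(x)$, and the hypothesis $f^{'}(x)\le 2$ on $A$ yields $G^{'}(x)\ge -1$ on $A$, which is exactly the derivative condition of those lemmas. Finally, $G(c)=c-f(c)=c$ since $c$ is a root of $f$, and the equivalence $G(x)=x \Leftrightarrow f(x)=0$ together with the unique-root hypothesis on $f$ makes $c$ the unique fixed point of $G$ in $[a,b]$.

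With these verifications in hand, the argument splits on the location of $x_0$. If $x_0\in[a,c]$, then $G$ satisfies all the hypotheses of Lemma \ref{existextension1} on $[a,c]$, so the mean iteration of $G$, namely $\{x_n\}$, converges to $c$. If $x_0\in[c,b]$, then $G$ satisfies all the hypotheses of Lemma \ref{existextension2} on $[c,b]$, and again $\{x_n\}$ converges to $c$. Since these two cases cover every $x_0\in[a,b]$, the proof is complete.

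I expect no serious obstacle, as the statement is the mirror image of Corollary \ref{root1}: the only genuine content is recognizing that $G(x)=x-f(x)$ (rather than $x+f(x)$) is the correct substitution, and that it converts the sign condition ``$f<0$ on the left, $f>0$ on the right'' together with $f^{'}\le 2$ into exactly the hypotheses of the fixed-point lemmas. The one point worth checking is that each $x_n$ remains in the interval on which the relevant lemma is invoked, but this is already guaranteed by the monotonicity and boundedness established inside the conclusions of Lemmas \ref{existextension1} and \ref{existextension2}.
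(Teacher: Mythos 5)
Your proposal is correct and follows essentially the same route as the paper: the paper's proof also defines the auxiliary function $F(x)=-f(x)+x$ (identical to your $G$), rewrites the recurrence as $x_{n+1}=\frac{x_n+F(x_n)}{2}$, and invokes Lemma \ref{existextension1} on $[a,c]$ and Lemma \ref{existextension2} on $[c,b]$ according to where $x_0$ lies. Your write-up is in fact slightly more careful, since you explicitly verify the sign, derivative, and unique-fixed-point hypotheses that the paper merely asserts.
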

\begin{proof}Define $F(x)=-f(x)+x$ for each $x\in [a,b]$. $F(x)>x$ for each $x\in [a,c)$ and  $F$ satisfies all conditions of  Lemma $\ref{existextension1}$ on $[a,c]$. Let $x_0\in[a,c]$ and define $\{x_n\}$  by:
\begin{equation}
x_{n+1}=\frac{F(x_n)+x_n}{2}=x_n-\frac{f(x_n)}{2}\text{ for all } n\ge 0,
\end{equation}
 $\{x_n\}$ converges to $c$  by Lemma $\ref{existextension1}$. If $x_0\in [c,b]$, then $F$ satisfies all conditions of  Lemma $\ref{existextension2}$ on $[c,b]$ and again the above sequence converges to $c$. So the proof is complete.
\end{proof}
\begin{remark}\label{plusminus}
To be able to refer to them easily, we denote the sequence $\{x_n\}$ in $\ref{meanexco1}$ and $\ref{meanexco2}$ by $\{x^{+}_n\}$ and $\{x^{-}_n\}$ respectively.
\end{remark}

Now we are ready to provide a practical result to look for the roots of a given function on a given interval that satisfies some conditions as follows.
\begin{corollary}\label{nomiss}
Let  $f:\mathbb R\to \mathbb R$ be a real function, $x_0\in \mathbb R$ and $c$ be the nearest root of $f$ to $x_0$ such that $x_0<c$ and $\lvert  f^{'}(x)\rvert \le 2$ for each $x\in [x_0,c]$. Then, one of the following sequences converges $c$. This is also true when $c$ is the nearest root of $f$ to $x_0$ such that $c<x_0.$
\begin{flalign*}
& \hspace{.5cm} x^{+}_{n+1}=x^{+}_n+\frac{f(x^{+}_n)}{2},\\
& \hspace{.5cm}x^{-}_{n+1}=x^{-}_n-\frac{f(x^{-}_n)}{2}.
\end{flalign*}
\end{corollary}
\begin{proof} First, suppose $f(x_0)>0$ and $c$ be the nearest root of $f$ to $x_0$ such that $x_0<c$. $f$ has no root in $[x_0,c)$, so $f(x)>0$ for each $x\in [x_0,c)$. Now, $f$ satisfies all conditions of Corollary $\ref{root1}$. Therefore, the sequence $\{x^{+}_n\}$ converges to $c$. Now, suppose $f(x_0)<0$. Using Corollary $\ref{root2}$, similarly we deduce that  $\{x^{-}_n\}$ converges to $c$. The proof is similar when $c$ is the nearest root of $f$ to $x_0$ and $c<x_0.$ \\

\end{proof}
\begin{theorem}\label{practical}
Let $f$ be a continuous real-valued function on $[a,b]$  that is differentiable on $(a,b)$. Then,
\begin{itemize}
\item[(i)] if $f^{'}(x)\ge -2$ for each $x\in (a,b)$, $f(a)>0,f(b)<0$  and  $f$ has a unique root $c$  in $(a,b)$. Then, the sequence $\ref{plus}$ converges to $c$ for each $x_0\in [a,b]$.
\begin{equation}\label{plus}
x_{n+1}=x_n+\frac{f(x_n)}{2} \text { for each }n\ge 0,
\end{equation}
\item[(ii)] if $f^{'}(x)\le 2$ for each $x\in (a,b)$, $f(a)<0,f(b)>0$  and  $f$ has a unique root $c$  in $(a,b)$. Then, the sequence $\ref{minus}$ converges to $c$ for each $x_0\in [a,b]$.
\begin{equation}\label{minus}
x_{n+1}=x_n-\frac{f(x_n)}{2} \text { for each }n\ge 0.
\end{equation}
\end{itemize}

\end{theorem}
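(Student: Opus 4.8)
The plan is to deduce Theorem \ref{practical} directly from Corollaries \ref{root1} and \ref{root2}. Comparing the hypotheses, the only gap is that each corollary assumes the full sign pattern of $f$ on the two sides of the root (for instance $f(x)>0$ on $[a,c)$ and $f(x)<0$ on $(c,b]$ in Corollary \ref{root1}), whereas the theorem prescribes only the endpoint values $f(a)>0$ and $f(b)<0$ together with uniqueness of the root $c$. Hence the whole task reduces to recovering the sign pattern from the endpoint signs, the uniqueness of $c$, and continuity.

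I would first treat part (i). Since $f(a)>0$, the point $a$ is not a zero, and since $c$ is the unique root of $f$ in $(a,b)$ there is no zero in the subinterval $(a,c)$; together these give that $f$ has no zero in $[a,c)$. Now I would argue by the intermediate value theorem: if $f(x_0)<0$ for some $x_0\in(a,c)$, then applying the intermediate value theorem to $f$ on $[a,x_0]$, where $f(a)>0>f(x_0)$, would produce a zero of $f$ in $(a,x_0)\subset(a,c)$, contradicting the uniqueness of $c$. Therefore $f(x)>0$ for every $x\in[a,c)$. Symmetrically, from $f(b)<0$ and the absence of zeros in $(c,b]$ the same intermediate value argument yields $f(x)<0$ for every $x\in(c,b]$.

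It then remains only to verify the differentiability hypothesis of Corollary \ref{root1}. Since $(a,c)\cup(c,b)\subseteq(a,b)$, differentiability of $f$ on $(a,b)$ gives differentiability on $A=(a,c)\cup(c,b)$, and the bound $f^{'}(x)\ge -2$ on $(a,b)$ restricts to the same bound on $A$. Thus all hypotheses of Corollary \ref{root1} are met, and the sequence $\{x^{+}_n\}$ in \ref{plus} converges to $c$ for every $x_0\in[a,b]$. Part (ii) is proved in exactly the same way with all signs reversed: $f(a)<0$ and $f(b)>0$ force $f(x)<0$ on $[a,c)$ and $f(x)>0$ on $(c,b]$ by the identical intermediate value argument, and appealing to Corollary \ref{root2} gives convergence of $\{x^{-}_n\}$ in \ref{minus}.

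As for difficulties, there is no deep obstacle here: the argument is essentially a sign-bookkeeping reduction to the two corollaries. The one point that requires care is the endpoint handling, namely that the uniqueness hypothesis concerns zeros only in the open interval $(a,b)$, so I must use the prescribed values $f(a)$ and $f(b)$ directly both to exclude zeros at the endpoints and to anchor the intermediate value comparison, rather than attempting to invoke uniqueness there.
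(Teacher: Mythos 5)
Your proposal is correct and follows essentially the same route as the paper's own proof: both recover the sign pattern $f>0$ on $[a,c)$ and $f<0$ on $(c,b]$ (and the reverse in part (ii)) from the endpoint signs and uniqueness via the intermediate value theorem, and then invoke Corollaries \ref{root1} and \ref{root2}. Your treatment is in fact slightly more careful than the paper's, since you explicitly note that uniqueness applies only on the open interval and that the endpoint values must be used to exclude zeros at $a$ and $b$.
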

\begin{proof}
First, suppose $f$ satisfies in $(i)$. Let  $x<c$. If $f(x)<0$, since $f(x)f(a)<0$, $f$ has a root in $(a,x)$ that contradicts with the uniqueness of $c$. Therefore, $f(x)>0$ for each $x\in [a,c)$. Similarly, $f(x)<0$ for each $x\in (c,b]$. Now, $f$ satisfies all conditions of Corollary $\ref{root1}$ on the interval $[a,b]$, therefore the sequence $\ref{plus}$ converges to $c$ for each $x_0\in [a,b]$\\
Now, suppose $f$ satisfies in $(ii)$. We see that $f(x)<0$ for each $x\in [a,c)$ and $f(x)>0$ for each $x\in (c,b]$. Consider that $f$ satisfies all conditions of  Corollary $\ref{root2}$. So, the sequence $\ref{minus}$ converges to $c$. If $x_0=c$ the proof is evident.\\
\end{proof}
\begin{remark}
It is worth to mention that in Theorem \ref{practical} we are not allowed to replace the Equation $\ref{minus}$ with  Equation $\ref{plus}$. See Example $\ref{notallowed}$ for a counterexample in this regard.
\end{remark}
\section{Examples}
In this section, we illustrate our results with some examples. \\
The following example shows that for a continuous function $f:[a,b]\to [a,b]$ the sequence $x_{n+1}=\frac{x_n+f(x_n)}{2}$ does not converge to a fixed point of $f$ for some $x_0\in [a,b]$.
\begin{example}\label{counterexample}Define $f:[0,1]\to[0,1]$ as follows:
\begin{equation*}
f(x)=\begin{cases}
       1 \quad &\text{if} \, 0\le x\le \frac{3}{8} \\
           -2x+\frac{7}{4} \quad &\text{if} \, \frac{3}{8}\le x\le \frac{7}{12} \\
          -6x+\frac{49}{12}  \quad &\text{if} \, \frac{7}{12}\le x\le \frac{49}{72} \\
           0 \quad &\text{if} \, \frac{49}{72}\le x\le  1 \\
     \end{cases}
\end{equation*}
$f$ is continuous and $x=\frac{7}{12}$ is the unique fixed point of $f$. Let $x_0=\frac{23}{63}$ and for each $n\ge 1$ define:
\begin{equation}
x_{n+1}=\frac{x_n+f(x_n)}{2}.
\end{equation}
We see that $x_1=\frac{86}{126},x_2=\frac{43}{126},x_3=\frac{169}{252},x_4=\frac{23}{63}=x_0$. Therefore, $\{x_n\}$ is not convergent. The reason is that, altough $f$ satisfies all conditions of Lemma $\ref{existextension1}$ on the interval $[\frac{23}{63},\frac{7}{12}]$, but $f$ does not satisfy the differentiability condition at the point $\frac{3}{8}$ of this interval. More clearly, $c=\frac{7}{12}$ is the unique fixed point of $f$ in $[0,c]$, and $f(x)>x$ for each $x\in [0,c)$. But, $f$ is not differentiable  at $x=\frac{3}{8}$ and $f^{'}\not \ge -1$ on $(\frac{3}{8}, c)$.
\end{example}
The following example shows that, in Lemma \ref{existextension1} it is necessary to have $f(x)>x$ on $[a,c)$.
\begin{example}\label{notbereplaced}
For each $x\in [2/3,1]$ define:
\begin{equation*}
f(x)=2x-1.
\end{equation*}
$f$ is a continuous function on $[2/3,1]$ and  $x=1$ is the unique fixed point of $f$ in $[2/3,1]$. $f(x)<x$ for each $x<1$. For each $x_0$  in $[2/3,1)$ we have $f(x_0)<x_0$. Therefore, $x_1=\frac{f(x_0)+x_0}{2}<x_0$. Through induction we see that $x_{n+1}\le x_n<1$ for all $n\ge 0$. Therefore, $\{x_n\}$ does not converges to $1$. So  in Lemma $\ref{existextension1}$ the assumption  $f(x)>x$ for each $x\in[a,c)$ can not be replaced with  $f(x)<x$ for each $x\in[a,c)$. Also suppose that $f(x)=2x-1$ for each $x\in[1,2]$. $c=1$ is the unique fixed point of $f$ in $[1,2]$ and $f(x)>x$ for each $x\in (1,2]$. Let $x_0\in (1,2]$, similarly we see that   $x_{n+1}>x_n>\dots>x_0>1$ for each $n\ge 0$. Therefore, $\{x_n\}$ does not converges to $1$. So in Lemma \ref{existextension2}, the condition $f(x)<x$ on $(c,b]$ can not be replaced with $f(x)>x$ on $(c,b]$.
\end{example}
\begin{example}\label{notallowed}
Let $f:[-1,1]\to[-1,1]$  be defined by:
\begin{equation*}
f(x)=\begin{cases}
      \frac{2}{3} (x-\frac{1}{2})\quad &\text{if} \, -1\le x\le \frac{1}{2} \\
           2x-1\quad &\text{if} \, \frac{1}{2}\le x\le 1 \\
     \end{cases}
\end{equation*}
Let $g:[-1,1]\to[-1,1]$  be defined by:
\begin{equation*}
g(x)=\begin{cases}
      \frac{-2}{3} (x-\frac{1}{2})\quad &\text{if} \, -1\le x\le \frac{1}{2} \\
           -2x+1\quad &\text{if} \, \frac{1}{2}\le x\le 1 \\
     \end{cases}
\end{equation*}
$x=\frac{1}{2}$ is the unique root of $f$ in $[-1,1]$. Let $\{x_n\}$ be the sequence defined  in the Equation \ref{plus} in Theorem $\ref{practical}$ and $x_0=-1$. Then, $x_1=x_0+\frac{f(x_0)}{2}=\frac{-3}{2}\not \in [-1,1]$. The same is true if $x_0=1$. So Equation \ref{minus} can not be replaced with  Equation \ref{plus}  in Theorem $\ref{practical}$. Similarly using $g$ shows that  the Equation \ref{plus} can not be replaced with  Equation \ref{minus} in Theorem $\ref{practical}$.
\end{example}
In the following example, we make a  comparison between Newton-Raphson and our method. We choose some different starting points and verify how our method converge to the related root. We denote the Newton-Raphson sequence by $\{x^{N}_n\}$ and our sequences by $\{x^{-}_n\}$, $\{x^{+}_n\}$ as we declared in Remark $\ref{plusminus}$.
\begin{example}\label{newton}
Suppose $f(x)=x^3-2x+2$ for each $x\in \mathbb R$. $f$ has a unique root $c\in (-2,0)$. For $x_0=0$ we make a comparison between Newton method and our method. For Newton method we have: $x^{N}_0=0, x^{N}_1=1,x^{N}_2=0,\dots$. Therefore, we see that the Newton sequence $\{x^{N}_n\}$ oscillates between $0$ and $1$. The sequence $\{x^{-}_n\}$ also does not converges if $x^{-}_0=0$. The reason is that, the condition $f^{'}(x)=3x^2-2\le 2$  on the interval $(c,0]$ does not hold. To fix this problem, we define the function $g$ by $g=\frac{f}{5}$. For each $x\in (c,0)$ we have:
\begin{equation}
g^{'}(x)\le \sup\{ \frac{f^{'}(x)}{5}:x\in (c,0)\}=\sup \{\frac{3x^2-2}{5}:x\in (-2,0)\}\le2.
\end{equation}
Therefore we replace $f$ with  $g=\frac{f}{5}$. Now $f$ satisfies all conditions of Corollary $\ref{root2}$ on $(c,0)$ and the sequence $\{x^{-}_n\}$ with initial point $x_0=0$ converges to $c$.\\
In each interval we divide $f$ by a suitable constant number to apply our method on that interval. $g=f/4$  satisfies all conditions of  Corollary $\ref{root2}$ on $[-2,c]$. Therefore,  $\{x^{-}_n\}$ with initial point $x_0=-2$ converges to $c$. $f^{'}/13\le 2$ on $[-2,3]$ so for the function $f/13$ the sequence $\{x^{-}_n\}$ converges to $c$ for each $x_0\in [c,3]$.
\end{example}
We give the following example to assert that when our method does not work.
\begin{example}Let $f(x)=x^{1/3}$. We know that $x=0$ is the unique root of $f$ in $(-1,1)$. $f(x)<0$ for each $x\in (-1,0)$ and $f(x)>0$ for each $x\in (0,1)$. By Corollary $\ref{root2}$ we should use the sequence $\{x^{-}_n\}$ to find the root. Examining the sequence  $\{x^{-}_n\}$ for some $x_0\in (-1,1)$, we see that the sequence is not convergent. The reason is that $\lim_{x\to 0}f^{'}(x)=+\infty$ and whatever $\delta>0$ we choose $f^{'}$ remains unbounded on $(-\delta,\delta)$. Therefore, the related differentiability condition does not hold and consequently our method can not be applied.
\end{example}
\bmhead{Acknowledgments}
This research did not receive any specific grant from funding agencies in the public, commercial, or not-for-profit sectors.

\bibliography{sn-bibliography}

\end{document}